\documentclass{article}
\usepackage{latexsym}
\usepackage{hyperref}
\usepackage[all]{xy}

\usepackage{amsmath}
\usepackage{xypic}

\usepackage{amsmath}
\usepackage{amssymb}
\usepackage{amsfonts}
\usepackage{amsthm}
\usepackage{mathrsfs}
\usepackage[utf8]{inputenc}



\usepackage{multicol}

\DeclareUnicodeCharacter{00A0}{ }

\DeclareFontFamily{U}{wncy}{}
\DeclareFontShape{U}{wncy}{m}{n}{<->wncyr10}{}
\DeclareSymbolFont{mcy}{U}{wncy}{m}{n}
\DeclareMathSymbol{\Sh}{\mathord}{mcy}{"58}

\newtheorem{theo}{Theorem}

\newtheorem{coro}[theo]{Corollary}
\newtheorem{lemm}[theo]{Lemma}

\newtheorem*{theoUn}{Theorem}
\theoremstyle{definition}
\newtheorem{defi}[theo]{Definition}

\newtheorem{rema}[theo]{Remark}

\newtheorem{ques}[theo]{Question}

\newcommand{\cA}{\mathcal{A}}
\renewcommand{\AA}{\mathbb{A}}
\newcommand{\bbA}{\mathbb{A}}

\newcommand{\OO}{\mathcal{O}}

\newcommand{\PP}{\mathbb{P}}

\DeclareMathOperator{\Spec}{Spec}

\DeclareMathOperator{\codim}{codim}

\newcommand{\oc}{\overline{\square}}
\newcommand{\TT}{\mathfrak{T}}

\newcommand{\VV}{\mathfrak{V}}

\newcommand{\XX}{\mathfrak{X}}
\newcommand{\YY}{\mathfrak{Y}}
\newcommand{\EE}{\mathfrak{E}}
\newcommand{\ZZ}{\mathfrak{Z}}

\newcommand{\PreShv}{\mathsf{PreShv}}
\newcommand{\Shv}{\mathsf{Shv}}

\newcommand{\Cor}{\mathsf{Cor}}
\newcommand{\uMCor}{\mathsf{\underline{M}Cor}}

\newcommand{\Nis}{\mathsf{Nis}}

\newcommand{\DM}{\operatorname{\mathbf{DM}}}

\newcommand{\uMDM}{\operatorname{\mathbf{\underline{M}DM}}}

\newcommand{\MV}{\mathsf{MV}}
\newcommand{\CI}{\mathsf{CI}}

\newcommand{\ol}{\overline}
\newcommand{\bcube}{{\ol{\square}}}
\newcommand{\eff}{{\operatorname{eff}}}

\newcommand{\Xinf}{X_\infty}
\newcommand{\Zinf}{Z_\infty}

\title{Smooth blowup square for motives with modulus}

\author{Shane Kelly, Shuji Saito}

\begin{document}

\maketitle

\begin{abstract}
In this self-contained paper we prove that Voevodsky's smooth blowup triangle of motives generalises to a smooth blowup triangle of motives with modulus.
\end{abstract}

%
%
%
%
%

\section{Introduction}

In \cite[Prop.3.5.2]{voe3}, Voevodsky proves that if $Z \to X$ is a (regular) closed immersion of smooth $k$-varieties, then there is a distinguished triangle 
\[ M(E) \to M(Bl_XZ) \oplus M(Z) \to M(X) \to M(E)[1] \]
associated to the blowup $Bl_XZ$ where $E$ is the exceptional divisor. In this article, we prove a modulus version of this result which specialises to Voevodsky's under the canonical ``interior'' functor $\uMDM^{\eff}(k) \to \DM^\eff(k)$.

Our situation is the following: $\ol{X}$ is a smooth $k$-variety, and $X_\infty$ an effective Cartier divisor on $\ol{X}$ with support strict normal crossings. We have a closed immersion of smooth varieties $\ol{Z} \to \ol{X}$ which is transverse to $X_\infty$ (see Def.\ref{defi:lst} for the precise meaning of transverse). Let $\ol{X'}$ be the blowup of $\ol{X}$ in $\ol{Z}$ with exceptional divisor $\ol{E}$, and let $X'_\infty, E_\infty, Z_\infty$ be the respective pullbacks of $X_\infty$.

\begin{theoUn}
There is a canonical distinguished triangle
\[ M(\ol{E}, E_\infty) \to M(\ol{Z}, Z_\infty) \oplus M(\ol{X'}, X'_\infty) \to M(\ol{X}, X_\infty) \to M(\ol{E}, E_\infty)[1] \]
in $\uMDM^{\eff}$ (as defined in Definition~\ref{defi:MDM}).
\end{theoUn}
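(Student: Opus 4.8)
\emph{Proof strategy.}
The plan is to prove the equivalent statement that $M(-)$ carries the commutative square of modulus pairs
\[
\begin{array}{ccc}
(\ol E, E_\infty) & \longrightarrow & (\ol{X'}, X'_\infty)\\
\downarrow & & \downarrow\\
(\ol Z, Z_\infty) & \longrightarrow & (\ol X, X_\infty)
\end{array}
\]
to a homotopy cocartesian square in $\uMDM^{\eff}$: its defining cofibre sequence is precisely the triangle in the statement, so this yields the triangle canonically (one really produces the square, an object $\uMDM^{\eff}$ handles at the level of its natural enhancement, Definition~\ref{defi:MDM}), and applying the interior functor $\uMDM^{\eff}\to\DM^{\eff}$ recovers Voevodsky's triangle. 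Note the square commutes: both composites to $\ol X$ equal the canonical morphism $\ol E\to\ol X$ (indeed $\ol E = f^{-1}(\ol Z)$ for the blowdown $f$, and $Z_\infty, E_\infty, X'_\infty$ are by definition pullbacks of $X_\infty$, so it is even a square in $\uMCor$), so that being cocartesian is exactly the vanishing of the total cofibre.

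I would first reduce to a local model. Each ingredient — the blowup $\ol{X'}$, its exceptional divisor $\ol E$, the four moduli, and transversality in the sense of Definition~\ref{defi:lst} — is stable under flat, in particular étale, base change along $\ol X$, and $\uMDM^{\eff}$ satisfies Nisnevich (Mayer--Vietoris) descent. Since the total cofibre of a square commutes with homotopy pushouts, the blowup square is cocartesian over $\ol X$ as soon as it is so over each term of a Nisnevich square for $\ol X$; a Noetherian induction then reduces us to the standard transverse situation of Definition~\ref{defi:lst}, namely $\ol X = \AA^c\times\ol Z$, $\ol Z = \{0\}\times\ol Z$ the zero section, and $X_\infty = \pi^{*}Z_\infty$ for the projection $\pi\colon\AA^c\times\ol Z\to\ol Z$ (possibly after a further Zariski localisation on $\ol Z$ to trivialise the normal bundle). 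There $\ol{X'} = (Bl_0\AA^c)\times\ol Z$ and $\ol E = \PP^{c-1}\times\ol Z$, the latter the zero section of $\mathcal{O}_{\PP^{c-1}}(-1)$ pulled back to $\ol Z$, and the entire blowup square of modulus pairs is the external product over $(\ol Z,Z_\infty)$ of the \emph{absolute} smooth blowup square of $(\AA^c;\{0\})$ with empty moduli. As $M(-)$ is symmetric monoidal and $-\otimes M(\ol Z,Z_\infty)$ is exact, it suffices to treat that absolute square.

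The absolute square is the crux, and the one point at which one genuinely leaves Voevodsky's world: the interior functor is not conservative and $\uMDM^{\eff}$ is not $\AA^1$-invariant (already $M(\AA^1,\emptyset)\not\simeq\un$), so neither homotopy invariance nor purity is available, and the assertion does not follow formally from the known case in $\DM^{\eff}$. I would still imitate the shape of Voevodsky's proof. Writing $Bl_0\AA^c = \mathrm{Tot}\,\mathcal{O}_{\PP^{c-1}}(-1)$, with exceptional divisor equal to its zero section $\PP^{c-1}$ and with $Bl_0\AA^c\smallsetminus\PP^{c-1}\cong\AA^c\smallsetminus\{0\}$, and using that the blowdown is an isomorphism over $\AA^c\smallsetminus\{0\}$, the two localisation cofibre sequences of the closed immersions $\PP^{c-1}\hookrightarrow Bl_0\AA^c$ and $\{0\}\hookrightarrow\AA^c$ share their open term, and the blowdown maps the first to the second; an octahedron then identifies $\mathrm{cofib}(M(\ol{X'})\to M(\ol X))$ with the cofibre of the induced map on motives with support $M_{\PP^{c-1}}(Bl_0\AA^c,\emptyset)\to M_{\{0\}}(\AA^c,\emptyset)$, and cocartesianness amounts to this cofibre being $\mathrm{cofib}\big(M(\PP^{c-1},\emptyset)\to\un\big)$. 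To evaluate these motives with support — which, purity failing, one cannot simply replace by Tate twists — I would use the projective bundle formula in $\uMDM^{\eff}$ together with the $\PP^1$-bundle presentation $Bl_0\PP^c = \PP\big(\mathcal{O}_{\PP^{c-1}}(-1)\oplus\mathcal{O}\big)$, whose two disjoint sections are the exceptional divisor and the strict transform of the hyperplane at infinity and which realises $Bl_0\AA^c$ as the complement of one of them, and would organise the computation as an induction on $c$, the base case $Bl_0\AA^1 = \AA^1$ being trivial. Carrying out this final step — reconstructing by hand, without $\AA^1$-invariance, enough of $M(Bl_0\AA^c,\emptyset)$, $M(\PP^{c-1},\emptyset)$, $M(\AA^c,\emptyset)$ and of the blowdown between them to finish the diagram chase — is the main obstacle.
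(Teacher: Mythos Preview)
Your Nisnevich/Mayer--Vietoris reduction to a product situation is correct and is essentially the content of Lemma~\ref{lemm:2} and Lemma~\ref{lemm:3} in the paper. The genuine gap is in the choice of local model and, consequently, in the ``absolute'' step that you yourself flag as the main obstacle and do not complete. You reduce to the blowup of $\{0\}\hookrightarrow(\AA^c,\varnothing)$ with \emph{empty} modulus in the fibre direction; since $M(\AA^c,\varnothing)$ is not contractible in $\uMDM^{\eff}$, the horizontal maps of the square are not isomorphisms, and you are forced to invoke a projective bundle formula, localisation cofibre sequences for closed immersions, and motives with support. None of these are available here: by Definition~\ref{defi:MDM} the only relations are the class $\MV$ and $\bcube$-invariance, and the paper explicitly lists the projective bundle theorem as a missing extension. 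So as written the argument does not close.

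The missing idea is to \emph{put modulus on the fibre}. The paper passes instead to the local model $\ZZ\otimes\{0,\dots,0\}\hookrightarrow\ZZ\otimes(\PP^c,\PP^{c-1})$; this costs nothing, since the \'etale neighbourhoods produced by Lemma~\ref{lemm:cover} land in $\ol Z\times\AA^c\subset\ol Z\times\PP^c$ and therefore do not see the added $\PP^{c-1}$ in the modulus. Now the horizontal maps \emph{are} isomorphisms (Lemma~\ref{lemm:1}): blowing up a point of the modulus $\PP^{c-1}\subset\PP^c$ is already an isomorphism in $\uMCor$ (Remark~\ref{rema:leftProperRemark}), and the resulting pair is a $\bcube$-bundle over $(\PP^{c-1},\PP^{c-2})$, so Lemma~\ref{lemm:MVcubeBundle} plus induction give $M(\PP^c,\PP^{c-1})\cong M(\Spec k)$. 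The same $\PP^1$-bundle structure on the blowup at $\{0,\dots,0\}\notin\PP^{c-1}$, now with the strict transform of $\PP^{c-1}$ as modulus, is again a $\bcube$-bundle over the exceptional divisor $(E,\varnothing)$, giving the other horizontal isomorphism. The square is then trivially cocartesian. You were one step away: the $\PP^1$-bundle $Bl_0\PP^c\to\PP^{c-1}$ you wrote down is exactly the right geometry, but it must be used as a $\bcube$-bundle of modulus pairs, not via an unavailable projective bundle decomposition.
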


There are at least two obvious extensions of this result which we do not deal with, partly to keep this paper self-contained, but also because otherwise it might never appear (it has been sitting in the authors' drawer since 2017).

\emph{Splitting.} In the presence of a projective bundle theorem, the proof of \cite[Prop.3.5.3]{voe3} (with $\AA^1$ replaced with $\bcube$) would show that this triangle has a canonical splitting.

\emph{Cd structures.} 
If $k$ has characteristic zero, or more generally satisfies a strong resolution of singularities hypothesis, we expect that the class of squares of the form \eqref{blowupsquare} (on page~\pageref{blowupsquare}) together with those in the class $\MV$ form a bounded, complete, regular cd structure on a suitable subcategory of $\uMCor(k)$ in the sense of \cite{voe2}. We leave this question for another time. We point out only that if one considers the associated topology on $\uMCor(k)$, the existence of a left adjoint to the inclusion $\Shv(\uMCor(k)) \to \PreShv(\uMCor(k))$ seems to be subtle, due to the fact that (Left properness) allows non-finiteness, cf.Rem.\ref{rema:leftProperRemark}.

This paper is self-contained in the sense that it doesn't use any results from the 2018 Kahn, Saito, Yamazaki preprint ``Motives with Modulus''. Our definition of $\uMDM^{\eff}(k)$ is slightly non-standard, but our definition is expected to agree with the standard one cf.Remark~\ref{rema:MDMdefs}.

The main result of this paper is applied by Matsumoto in \cite{mat} and its sequel to produce various generalisations of Voevodsky's Gysin triangle.

\section{Basic definitions}

Following tradition, we always work over a perfect field $k$.

In this section, \emph{after} defining all the requisite terms, we \emph{will} define $\uMDM^{\eff}$ as:

\begin{defi} \label{defi:MDM}
The category $\uMDM^{\eff}(k)$ is the Verdier localisation
\[ \uMDM^{\eff}(k) \stackrel{def}{=} \frac{D(\PreShv(\uMCor(k)))}{\left \langle \MV, \CI \right \rangle} \]
of the derived category of the category of presheaves on the additive category $\uMCor(k)$, Def.~\ref{defi:Mcor}, with respect to the two classes of complexes $\MV$ and $\CI$, Def.~\ref{defi:MVCI}. We write 
\[ M: \uMCor(k) \to \uMDM^{\eff} \]
for the functor induced by the Yoneda embedding $\uMCor(k) \to \PreShv(\uMCor(k))$.
\end{defi}

\begin{rema} \label{rema:MDMdefs}
In the upcoming Kahn, Saito, Yamazaki paper ``Motives with Modulus, II'' the category $\uMDM^{\eff}(k)$ will be defined as using Nisnevich sheaves as $D(\Shv_{\Nis}(\uMCor(k))) / \langle \CI \rangle$. This latter definition is expected to produce the same category as Definition~\ref{defi:MDM} (cf.\cite{voe2}). We do not use Nisnevich sheaves anywhere in this paper. 
\end{rema}

We begin with $\uMCor(k)$. This is a generalisation of Voevodsky's category $\Cor(k)$ which incorporates the notion of a modulus.

\begin{defi}[{Kahn, Saito, Yamazaki}] \label{defi:Mcor}
Objects of the category $\uMCor(k)$ are pairs $\XX = (\ol{X}, X_\infty)$ where:
\begin{description}
 \item[{$\ol{X}$}] is a separated $k$-scheme of finite type which is locally integral, and
 \item[{$X_\infty$}] is an effective Cartier divisor on $\ol{X}$ such that 
 \item[{$X^\circ \stackrel{def}{=} \ol{X} - X_\infty$}] is smooth.
\end{description}
Such pairs are called \emph{modulus pairs}. Given two modulus pairs $\XX, \YY$ the hom group
\[ \hom_{\uMCor}(\XX, \YY) \subseteq \hom_{\Cor}(X^\circ, Y^\circ) \]
is the subgroup of left proper, admissible correspondences. That is, it is the free abelian group associated to the set of closed integral subschemes $Z \subseteq X^\circ \times Y^\circ$ such that 
\begin{description}
 \item[{(Correspondenceness)}] $Z \to X^\circ$ is finite and dominates a connected component of $X^\circ$,
 \item[{(Admissibility)}] $p^*X_\infty \geq q^*Y_\infty$ were $p, q: {\ol{Z}}^N \to \ol{X}, \ol{Y}$ are the canonical morphisms from the normalisation ${\ol{Z}}^N$ of the closure $\ol{Z}$ of $Z$ in $\ol{X} \times \ol{Y}$.
 \item[{(Left properness)}] $\ol{Z} \to \ol{X}$ is proper, and
 \end{description}
The category is $\uMCor(k)$ is additive; $(\ol{X}, X_\infty) \oplus (\ol{Y}, Y_\infty) = (\ol{X} \amalg \ol{Y}, X_\infty + Y_\infty)$. It is also equipped with a symmetric mono{\"i}dal structure, given on objects by
\[ (\ol{X}, X_\infty) \otimes (\ol{Y}, Y_\infty) = (\ol{X} \times \ol{Y}, \ol{X} {\times} Y_\infty +  X_\infty {\times} \ol{Y}) \]
On morphisms it is the same as the product structure on Voevodsky's category $\Cor(k)$. In other words, the canonical faithful functor $(-)^\circ: \uMCor(k) \to \Cor(k)$ is mono{\"i}dal. However, we use the tensor structure almost exclusively as a notational convenience. The most complicated correspondences that we will apply $\otimes$ to are graphs of morphisms of schemes.

In the theory of motives with modulus the \emph{cube} object
\[ \bcube = (\PP^1, \{\infty\}) \]
takes the r{\^o}le of $\AA^1$.
\end{defi}

\begin{rema} \label{rema:leftProperRemark}
Note that (Left properness) allows non-finite morphisms; we are allowed to blowup inside the modulus. Such blowups are isomorphisms in $\uMCor$. This is a requisite for $\bcube$ to have the structure of an interval object in $\uMCor$ in the sense of Voevodsky, however it makes the sheaf theory more subtle.

Since we do not use sheaves in this article, this does not concern us.

What does concern us however, is the fact that blowups inside the modulus are isomorphisms. We will use this fact in Lemma~\ref{lemm:1} to show that $(\PP^{n+1}, \PP^n)$ is contractible in $\uMDM^\eff(k)$. This is the only place where this fact is used.
\end{rema}

\begin{defi} \label{defi:MVCI}
We consider the following complexes in the additive category $\uMCor(k)$.
\begin{description}
 \item[{$\MV$}] is the class of complexes of the form
 \[ [ (\ol{W}, W_\infty) \to (\ol{V}, V_\infty) \oplus (\ol{U}, U_\infty) \to (\ol{X}, X_\infty) ] \]
induced by cartesian squares of $k$-schemes
\[ \xymatrix{
\ol{W} \ar[r] \ar[d] \ar[dr]_h & \ol{V} \ar[d]^f \\
\ol{U} \ar[r]_{j} & \ol{X}
} \]
where $j$ is an open immersion, $f$ is étale, and $f$ induces an isomorphism over $\ol{X} \setminus \ol{U}$. We require the morphisms of modulus pairs to be \emph{minimal} in the sense that $U_\infty = j^*X_\infty$, $V_\infty = f^*X_\infty$, and $W_\infty = h^*W_\infty$.

 \item[{$\CI$}] is the class of complexes of the form
 \[ [\XX \otimes \bcube \to \XX] \]
for $\XX \in \uMCor(k)$.
\end{description}
\end{defi}

Killing the complexes in $\MV$ leads to the obvious locality properties one might expect, such as the following.

\begin{lemm} \label{lemm:MVcubeBundle}
Let $(\ol{X}, X_\infty) \to (\ol{Y}, Y_\infty)$ be a $\bcube$-bundle. I.e., a morphism of modulus pairs induced by a morphism of schemes $f: \ol{X} \to \ol{Y}$ such that there exists an open Zariski covering $\{V_i \to \ol{Y}\}_{i \in I}$ and isomorphisms (compatible with the morphisms to $(V_i,V_i \cap Y_\infty)$)
\[ (U_i, U_i \cap X_\infty) \cong (V_i, V_i \cap Y_\infty) \otimes \bcube \]
where $U_i = f^{-1}V_i$. Then $M(\ol{X}, X_\infty) \cong M(\ol{Y}, Y_\infty)$.
\end{lemm}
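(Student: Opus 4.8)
The plan is to reduce the statement to a local computation using a Zariski Mayer–Vietoris argument, and then to the single fundamental case where the $\bcube$-bundle is globally trivial.

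First I would observe that the class of morphisms $f\colon(\ol X,X_\infty)\to(\ol Y,Y_\infty)$ which induce isomorphisms $M(\ol X,X_\infty)\cong M(\ol Y,Y_\infty)$ is closed under the kind of gluing we have available. Concretely, suppose $\{V_i\}_{i\in I}$ is the trivialising cover; by Noetherian induction on the number of opens in a finite subcover (which exists since $\ol Y$ is quasi-compact), it suffices to treat the case of a cover by two opens $V_1,V_2$ together with the already-known cases $f|_{V_1}$, $f|_{V_2}$ and $f|_{V_1\cap V_2}$. Here is where $\MV$ enters: the square
\[ \xymatrix{
f^{-1}(V_1\cap V_2) \ar[r] \ar[d] & f^{-1}V_1 \ar[d] \\
f^{-1}V_2 \ar[r] & \ol X
} \]
and the analogous square for $\ol Y$ are Zariski (hence in particular $\MV$) squares — note $f^{-1}V_i$ are open in $\ol X$ and the minimality condition $U_\infty=j^*X_\infty$ is automatic for these open immersions since we pull back the modulus. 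Thus $M(\ol X,X_\infty)$ sits in a distinguished triangle built from the $M(f^{-1}V_i,\cdot)$ and $M(f^{-1}(V_1\cap V_2),\cdot)$, compatibly with the corresponding triangle for $\ol Y$; the five lemma (in the triangulated sense: two of the three vertical maps being isomorphisms forces the third) then finishes the inductive step. So we are reduced to the case where $f$ is a globally trivial $\bcube$-bundle, i.e. $(\ol X,X_\infty)\cong(\ol Y,Y_\infty)\otimes\bcube$ over $(\ol Y,Y_\infty)$.

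In that reduced case, $M(\ol X,X_\infty)\cong M((\ol Y,Y_\infty)\otimes\bcube)$, and the complex $[(\ol Y,Y_\infty)\otimes\bcube\to(\ol Y,Y_\infty)]$ lies in the class $\CI$, which is killed in the Verdier localisation defining $\uMDM^\eff$. Hence the structure map induces an isomorphism $M((\ol Y,Y_\infty)\otimes\bcube)\xrightarrow{\ \sim\ }M(\ol Y,Y_\infty)$, and one should check that under the trivialisation this agrees with $M(f)$ (the trivialising isomorphism is required to be compatible with the maps to $(V_i,V_i\cap Y_\infty)$, which is exactly what makes this bookkeeping go through). This gives the claim.

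The main obstacle I anticipate is not any single hard idea but the careful handling of modulus data in the Mayer–Vietoris step: one must make sure that all the relevant squares of modulus pairs satisfy the minimality requirement in the definition of $\MV$, and that restricting a $\bcube$-bundle structure to an open of the base again yields a $\bcube$-bundle with the correctly-pulled-back modulus — here the fact that $\otimes\bcube$ adds $X_\infty\times\PP^1$ to the divisor and that this is compatible with restriction is the crux. A secondary subtlety is that $\MV$ as defined uses étale maps $f$ that are isomorphisms off an open $\ol U$; a plain Zariski two-open cover is the special case $f=\id_{V_1}\amalg\,?$ — more precisely one takes $\ol V=V_1$, $\ol U=V_2$, $\ol X=V_1\cup V_2$, with $f$ the open immersion $V_1\hookrightarrow\ol X$, which is étale and an isomorphism over $\ol X\setminus\ol U=V_1\setminus V_2$; so Zariski squares are indeed $\MV$ squares, and this should be remarked explicitly.
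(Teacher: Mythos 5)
Your proposal is correct and follows essentially the same route as the paper: reduce to a finite trivialising cover by quasi-compactness, induct on its size using the Zariski Mayer--Vietoris squares (which are $\MV$ squares) for $\ol{X}$ and $\ol{Y}$, and settle the globally trivial case by killing $\CI$. The only cosmetic difference is that the paper phrases the inductive step via vanishing of total complexes of the resulting $3\times 3$ diagram rather than via the ``two-out-of-three isomorphisms between distinguished triangles'' argument, but these are interchangeable here.
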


\begin{proof}
Since $\ol{Y}$ is quasicompact we can assume $I$ is finite. By induction on the size of $I$ we can assume that $M(U, U \cap X_\infty) \to M(V, V \cap Y_\infty)$ is an isomorphism for all $U, V$ of the form $U = \cup_{i \in J} U_i, V = \cup_{i \in J} V_i$ and $J \subsetneq I$. Choose some $i \in I$, set $J = I \setminus \{i\}$, and consider the diagram
\[ \xymatrix{
(U', U'_\infty) \ar[r] \ar[d] & (U, U_\infty) \oplus (U_i, U_{i\infty}) \ar[r] \ar[d] & (\ol{X}, X_\infty) \ar[d] \\
(V', V'_\infty) \ar[r] & (V, V_\infty) \oplus (V_i, V_{i\infty}) \ar[r] & (\ol{Y}, Y_\infty)
} \]
where the divisors are the obvious restrictions of $Y_\infty$ (resp. $X_\infty$), $U = \cup_{i \in J} U_i, V = \cup_{i \in J} V_i$, and $U' = U \cap U_i, V' = V \cap V_i$.

The left and middle verticle morphisms become isomorphisms in $\uMDM^{\eff}$ by hypothesis (inductive, and the one in the statement). Hence, the total complex of the left square is zero in $\uMDM^{\eff}$. On the other hand, the rows become zero in $\uMDM^{\eff}$ as they are in the class $\MV$. Hence, the total complex also becomes zero in $\uMDM^{\eff}$. So $[(\ol{X}, X_\infty) \to (\ol{Y}, Y_\infty)]$ is isomorphic in $\uMDM^{\eff}$ to the total complex of the left square, which we have seen to be zero.
\end{proof}

\section{Log smooth modulus pairs}

In Voevodsky's theory, one of the main uses of the distinguished Nisnevich squares is that, Nisnevich locally, (regular) closed immersions $Z \to X$ of smooth varieties are isomorphic to zero sections $Z \to \AA^c_Z$. We will use the Nisnevich condition in this way. However, we must isolate what we mean by a ``regular'' closed immersion of ``smooth'' modulus pairs.

\begin{defi} \label{defi:lst}
A modulus pair $(\ol{X}, X_\infty)$ is \emph{log smooth} if the total space $\ol{X}$ is smooth, and the support $|X_\infty|$ of the modulus is a strict normal crossings divisor. In other words, for every $x \in \ol{X}$, there exists a Zariski open neighbourhood $x \in U \stackrel{\iota}{\hookrightarrow}{X}$ and an étale morphism $q: U \to \AA^d = \Spec(k[T_1, \dots, T_d])$ such that $|\iota^{-1}X_\infty| = q^{-1}(\{T_1\dots T_s = 0\})$.

A morphism $i: (\ol{Z}, Z_\infty) \to (\ol{X}, X_\infty)$ of log smooth modulus pairs is said to be \emph{transversal} if
\begin{enumerate}
 \item $i$ is induced by a closed immersion $\ol{Z} \to \ol{X}$,
 \item $Z_\infty = i^{-1}X_\infty$, and
 \item for every $z \in \ol{Z}$ there exists an open neighbourhood $z \in U \stackrel{\iota}{\hookrightarrow}{X}$ and an étale morphism $q: U \to \AA^d = \Spec(k[T_1, \dots, T_d])$ such that $|\iota^{-1}X_\infty| = q^{-1}(\{T_1\dots T_s = 0\})$ and $\ol{Z} \cap U = q^{-1}(\AA^{d-c} \times \{0, \dots, 0\})$ with $s \leq d-c$.
\end{enumerate}
\end{defi}

\begin{lemm} \label{lemm:cover}
Let $(\ol{Z}, Z_\infty) \to (\ol{X}, X_\infty)$ be a transversal morphism of log smooth modulus pairs. Then there exists an open covering $\{j_i: U_i \to \ol{X}\}_{i = 0}^n$, and \'etale morphisms $U_i \stackrel{pr_1}{\leftarrow} U'_i \stackrel{pr_2}{\to} Z_i \times \bbA^c$ for $1 \leq i \leq n$ such that $U_0 = \ol{X} \setminus \ol{Z}$ and 
\[ Z_i \cong pr_1^{-1}(Z_i) = pr_2^{-1}(Z_i {\times} \{0, \dots, 0\})\;\text{ and }\;
pr_1^{-1}(U_{i \infty}) = pr_2^{-1}(Z_{i \infty} \times \bbA^c),\]
where $Z_i, U_{i \infty}, Z_{i \infty}$ are the intersections of $\ol{Z}, \Xinf$, and $\ol{Z} \cap \Xinf$ with $U_i$.

\end{lemm}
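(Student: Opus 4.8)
The plan is to handle the points of $\ol Z$ one at a time via the transversality charts of Definition~\ref{defi:lst}, building each étale correspondence as a fibre product over affine space.

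First I would fix $z \in \ol Z$ and invoke condition~(3) of Definition~\ref{defi:lst} to get a Zariski-open $z \in V \hookrightarrow \ol X$ and an étale $q \colon V \to \bbA^d = \Spec k[T_1,\dots,T_d]$ with $|X_\infty \cap V| = q^{-1}\{T_1\cdots T_s = 0\}$ and $\ol Z \cap V = q^{-1}(\bbA^{d-c}\times\{0\})$, writing $\bbA^d = \bbA^{d-c}\times\bbA^c$ via the last $c$ coordinates. Set $t_j = q^*T_j$. Since $q$ is étale and $\{T_j = 0\}$ is smooth, each vanishing locus $V(t_j)$ ($1 \le j \le s$) is smooth, so its connected components are pairwise disjoint and closed in $V$; deleting from $V$ those components not passing through $z$ (a closed set avoiding $z$), I may assume each $V(t_j)$ is empty or irreducible, and after renumbering that $V(t_j)$ is irreducible for $1 \le j \le s$ with $s \le d-c$. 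Then on the smooth variety $V$ the effective Cartier divisor $X_\infty|_V$, having support $\bigcup_{j=1}^s V(t_j)$, must equal $\sum_{j=1}^s m_j[V(t_j)]$ with $m_j \ge 1$; since each $V(t_j) = q^{-1}\{T_j = 0\}$ is irreducible this is $q^*D_0$ with $D_0 = \sum_{j=1}^s m_j[\{T_j=0\}]$ on $\bbA^d$, and $D_0 = \pi^*D_0'$ is pulled back along $\pi\colon\bbA^d\to\bbA^{d-c}$ from $D_0' = \sum_{j=1}^s m_j[\{T_j=0\}]$ on $\bbA^{d-c}$ (legitimate precisely because $s \le d-c$, so $T_1,\dots,T_s$ are among the first $d-c$ coordinates). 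This ``linearisation of the modulus'' is the one delicate point; it is exactly what fails without first making the $V(t_j)$ irreducible.

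Next I would form the correspondence. With $Z_V = \ol Z \cap V$ and $q_Z \colon Z_V \to \bbA^{d-c}$ the étale map induced by $q$, the map $q_Z \times \id \colon Z_V \times \bbA^c \to \bbA^d$ is étale too, so I set $U'_i := V \times_{\bbA^d}(Z_V \times \bbA^c)$ with its étale projections $pr_1 \colon U'_i \to V$, $pr_2 \colon U'_i \to Z_V \times \bbA^c$, which satisfy $q\circ pr_1 = (q_Z\times\id)\circ pr_2$. A direct fibre-product computation identifies both $pr_1^{-1}(Z_V)$ and $pr_2^{-1}(Z_V\times\{0\})$, as closed subschemes of $U'_i$, with $Z_V\times_{\bbA^{d-c}}Z_V$; as $q_Z$ is unramified the diagonal $\Delta\colon Z_V\to Z_V\times_{\bbA^{d-c}}Z_V$ is an open immersion, so $R := (Z_V\times_{\bbA^{d-c}}Z_V)\setminus\Delta(Z_V)$ is closed in $Z_V\times_{\bbA^{d-c}}Z_V$, hence (as $Z_V$ is closed in $V$) closed in $U'_i$. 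After replacing $U'_i$ by $U'_i \setminus R$ I put $U_i := pr_1(U'_i)$, an open of $V \subseteq \ol X$ containing $Z_V$ because $\Delta(Z_V)\subseteq U'_i$ surjects onto it. Then $pr_1\colon U'_i\to U_i$ is surjective étale, $Z_i := \ol Z\cap U_i = Z_V$, $pr_2\colon U'_i\to Z_i\times\bbA^c$ is étale, and $pr_1^{-1}(Z_i) = pr_2^{-1}(Z_i\times\{0\}) = \Delta(Z_V)\cong Z_i$, giving the first displayed identity.

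For the modulus identity, I would restrict $X_\infty|_V = q^*D_0$ to $Z_V$ to get $Z_\infty|_{Z_i} = (i^*X_\infty)|_{Z_i} = q_Z^*D_0'$, whence $Z_{i\infty}\times\bbA^c = (q_Z\times\id)^*D_0$ on $Z_i\times\bbA^c$ while $U_{i\infty} = q^*D_0$ on $U_i$; pulling back along $pr_1$, $pr_2$ and using $q\circ pr_1 = (q_Z\times\id)\circ pr_2$ gives
\[ pr_1^{-1}(U_{i\infty}) = (q\circ pr_1)^*D_0 = \bigl((q_Z\times\id)\circ pr_2\bigr)^*D_0 = pr_2^{-1}(Z_{i\infty}\times\bbA^c). \]
Finally, $\ol Z$ is quasi-compact, so finitely many of the opens $U_i$ just constructed, say $U_1,\dots,U_n$, cover $\ol Z$; together with $U_0 := \ol X\setminus\ol Z$ they form the required cover of $\ol X$. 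Every step other than the linearisation above is a routine manipulation of étale morphisms and fibre products.
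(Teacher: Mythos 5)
Your proof is correct and follows essentially the same route as the paper's: both construct the étale correspondence as the fibre product $V\times_{\bbA^d}(Z_V\times\bbA^c)$, identify $pr_1^{-1}(Z_V)$ and $pr_2^{-1}(Z_V\times\{0,\dots,0\})$ with $Z_V\times_{\bbA^{d-c}}Z_V$, and delete the closed off-diagonal component, following the argument of \cite[Lem.~3.2.28]{mv99}. The one place you go beyond the paper is the ``linearisation of the modulus'' step (shrinking the chart so that each branch $V(t_j)$ is irreducible, hence $X_\infty|_V=q^*D_0$ with $D_0$ pulled back from $\bbA^{d-c}$); this is genuinely needed if the displayed identity $pr_1^{-1}(U_{i\infty})=pr_2^{-1}(Z_{i\infty}\times\bbA^c)$ is to hold as an equality of effective Cartier divisors with multiplicities rather than merely of supports, a point the paper's proof passes over with ``by the construction''.
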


\begin{proof}
We follow an argument in the proof of \cite[Lem.3.2.28]{mv99} with a small modification. By the definition of ``transversal'', every point $x \in \ol{Z}$ admits an open neighbourhood $x \in U \subseteq \ol{X}$ equipped with an étale morphism $q: U\to \bbA^{r+c}=\Spec k[T_1,\dots,T_{r+c}]$ such that $Z = q^{-1}(\bbA^r {\times} \{0,\dots,0\})$ and $|U_\infty|=q^{-1}(\{T_1\dots T_s=0\})$ with $s\leq r$ and $c=\codim_{U}(Z)$, where $Z = \ol{Z} \cap U$ and $U_\infty = X_\infty \cap U$. Define $\Gamma= U \times_{\bbA^{r+c}} (Z \times \bbA^c)$, where the right morphism comes from the composition $Z \to X \to \bbA^r {\times} \bbA^c \to \bbA^r$. 
Then 
\[ \Gamma\times_{\bbA^{r+c}} (\bbA^r\times\{0,\dots,0\}) \cong Z \times_{\bbA^r} Z.\]
Since $Z \to \bbA^r$ is \'etale (or rather, because it is unramified), the above is a disjoint union of the diagonal $Z \hookrightarrow Z \times_{\bbA^r} Z$
and a closed subscheme $\Sigma\subset Z \times_{\bbA^r} Z$.
Put $U'=\Gamma{-}\Sigma$ with projections $pr_1:U'\to U$ and $pr_2 :U' \to Z \times\bbA^c$.
By the construction, 
\[ pr_1^{-1}(Z) \cong Z,
\;\; 
pr_2^{-1}(Z \times \{0,\dots,0\}) \cong Z \times \{0,\dots,0\},\;\; %
pr_1^{-1}(U_\infty) =  pr_2^{-1}( \Zinf \times \bbA^c), \]
where $\Zinf = U_\infty \cap Z$. This implies the lemma.
\end{proof}

\begin{coro} \label{coro:etNisLoc}
Let $(\ol{Z}, Z_\infty) \to (\ol{X}, X_\infty)$ be a transversal morphism of log smooth modulus pairs. Then there exists an open covering $\{U_i \to \ol{X}\}_{i = 0}^n$, and \'etale morphisms %
$U_i \stackrel{pr_1}{\leftarrow} U'_i \stackrel{pr_2}{\to} \ol{Z} {\times} \PP^c$ %
for $1 \leq i \leq n$ such that $U_0 = \ol{X} \setminus \ol{Z}$, and 
\[ Z_i \cong pr_1^{-1}(Z_i) = pr_2^{-1}(Z_i {\times} \{0, \dots, 0\})\;\text{ and }\;
pr_1^{-1}(U_{i \infty}) = pr_2^{-1}(Z_{i \infty} \times \PP^c),\]
where $Z_i, U_{i \infty}, Z_{i \infty}$ are the intersections of $\ol{Z}, \Xinf$, and $\ol{Z} \cap \Xinf$ with $U_i$, and $\{0, \dots, 0\} \in \PP^c$ is the point orthogonal to $\PP^{c-1} \subseteq \PP^c$ 
\end{coro}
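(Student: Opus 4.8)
The plan is to deduce Corollary~\ref{coro:etNisLoc} from Lemma~\ref{lemm:cover} by compactifying the affine factor $\bbA^c$ to $\PP^c$ along the last $c$ coordinates. So I start with the open covering $\{j_i : U_i \to \ol X\}_{i=0}^n$ and the \'etale correspondences $U_i \xleftarrow{pr_1} U'_i \xrightarrow{pr_2} Z_i \times \bbA^c$ supplied by Lemma~\ref{lemm:cover}, where $U_0 = \ol X \setminus \ol Z$. Fix the standard open immersion $\bbA^c \hookrightarrow \PP^c$ sending $0$ to the point $\{0,\dots,0\}$ orthogonal to the hyperplane $\PP^{c-1} = \PP^c \setminus \bbA^c$, so that $\bbA^c = \PP^c \setminus \PP^{c-1}$ and compose $pr_2$ with $\id_{Z_i} \times (\bbA^c \hookrightarrow \PP^c)$ to get $U'_i \to Z_i \times \PP^c$. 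To land in $\ol Z \times \PP^c$ rather than $Z_i \times \PP^c$, note that $Z_i = \ol Z \cap U_i$ is open in $\ol Z$, so $Z_i \times \PP^c \hookrightarrow \ol Z \times \PP^c$ is an open immersion; composing once more yields the desired \'etale morphism $pr_2 : U'_i \to \ol Z \times \PP^c$ (\'etale, being the composite of an \'etale morphism with open immersions).

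Next I check the two displayed equalities. For the first, $pr_1^{-1}(Z_i) = Z_i$ is already in Lemma~\ref{lemm:cover}, and $pr_2^{-1}(Z_i \times \{0,\dots,0\}) = pr_2^{-1}(Z_i \times \{0,\dots,0\}_{\bbA^c})$ is unchanged because the point $\{0,\dots,0\} \in \PP^c$ lies in the affine chart $\bbA^c$ and $pr_2$ (into $\ol Z \times \PP^c$) factors through $Z_i \times \bbA^c$; so the preimage is computed inside $Z_i \times \bbA^c$ and coincides with the Lemma's. For the second equality, $pr_2^{-1}(Z_{i\infty} \times \PP^c) = pr_2^{-1}(Z_{i\infty} \times \bbA^c)$ for the same reason: the image of $pr_2$ meets $\ol Z \times \PP^c$ only inside $Z_i \times \bbA^c$, so intersecting the preimage of $Z_{i\infty} \times \PP^c$ with that image throws away nothing, and then it equals $pr_1^{-1}(U_{i\infty})$ by Lemma~\ref{lemm:cover}. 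One should also remark that $Z_{i\infty} \times \PP^c$ is the schematic preimage of $Z_{i\infty}$ under the projection $\ol Z \times \PP^c \to \ol Z$, and likewise $Z_{i\infty}\times\bbA^c$, so these subscheme structures are compatible under the open immersion, making the preimage identities equalities of closed subschemes, not just of underlying sets.

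The main (and only real) subtlety is the compatibility of divisor pullbacks: since $U_{i\infty}$ is an effective Cartier divisor, I want $pr_1^* U_{i\infty}$ and $pr_2^*(Z_{i\infty} \times \PP^c)$ to agree as Cartier divisors on $U'_i$, not merely as closed subsets. This follows because $\{0,\dots,0\} \notin \PP^{c-1}$, so $Z_{i\infty}\times \PP^c$ and $Z_{i\infty}\times\bbA^c$ have the same restriction to the open subscheme of $\ol Z\times\PP^c$ through which $pr_2$ factors, hence pull back to the same effective Cartier divisor on $U'_i$; combined with Lemma~\ref{lemm:cover}'s identity $pr_1^{-1}(U_{i\infty}) = pr_2^{-1}(\Zinf\times\bbA^c)$ (which is already an identity of Cartier divisors there, both sides being flat pullbacks of Cartier divisors along \'etale maps), this gives the claim. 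With the covering unchanged and $U_0 = \ol X \setminus \ol Z$ as before, this completes the construction; the corollary is then just Lemma~\ref{lemm:cover} post-composed with the open immersion $\bbA^c \hookrightarrow \PP^c$ and the open immersion $Z_i \hookrightarrow \ol Z$.
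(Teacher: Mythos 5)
Your proposal is correct and is essentially the paper's own argument: the paper's proof is precisely ``take the cover from Lemma~\ref{lemm:cover} and compose $pr_2$ with the inclusion $\bbA^c \cong \PP^c \setminus \PP^{c-1} \hookrightarrow \PP^c$,'' and your additional verifications (that the preimages are unchanged because $pr_2$ factors through the affine chart, and that the identifications hold at the level of Cartier divisors) are exactly the routine checks the paper leaves implicit.
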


\begin{proof}
Take the cover from Lemma~\ref{lemm:cover}, and compose $pr_2$ with the inclusion $\bbA^c   \cong \PP^c \setminus \PP^{c-1} \to \PP^c$. 
\end{proof}

\section{Toric invariance}

Voevodsky's proof that the smooth blowup triangle in $\DM^{\eff}$ is distinguished roughly has two main steps.
\begin{enumerate}
 \item Nisnevich locally, blowups of regular immersions of smooth schemes look like the product of the closed subscheme $Z$ with the blowup of an affine space in the origin,
 \[ \xymatrix{
\PP^{c-1} \ar[r] \ar[d] & Bl_{\AA^c} \{0\} \ar[d] \\
\{0\} \ar[r] & \AA^c
 } \]

 \item By $\AA^1$-invariance, the two horizontal morphisms are isomorphisms.
\end{enumerate}

The following lemma is our version of the second step.

\begin{lemm} \label{lemm:1}
For all $\XX \in \uMCor$, we have  
\[ M(\XX \otimes (\PP^{n+1}, \PP^{n})) \cong M(\XX) \;\;\text{in }\uMDM^{\eff}.\]
Moreover, consider $\pi: P \to \PP^{n+1}$ the blowup of a point $x \in \PP^n$ with exceptional divisor $E$ and let $H$ be the strict transform of $\PP^{n}$. If $x \not\in \PP^{n}$, then 
\[ M(\XX \otimes(E, \varnothing)) \cong M(\XX \otimes (P, H)) \]
for all $\XX \in \uMCor$, and if $x \in \PP^{n}$, then
\[ M(\XX \otimes (P, \pi^{-1}\PP^{n})) \cong M(\XX)  \]
for all $\XX \in \uMCor$.
\end{lemm}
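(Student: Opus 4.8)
The plan is to reduce everything to the basic contractibility statement $M(\XX \otimes \bcube) \cong M(\XX)$ (which holds by definition of $\CI$, since $\XX \otimes \bcube = (\PP^1, \{\infty\}) \otimes \XX$ up to the identification $\bcube \otimes \YY$), together with the $\bcube$-bundle criterion of Lemma~\ref{lemm:MVcubeBundle} and the principle from Remark~\ref{rema:leftProperRemark} that blowups inside the modulus are isomorphisms in $\uMCor$.

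First I would handle $M(\XX \otimes (\PP^{n+1}, \PP^n)) \cong M(\XX)$ by induction on $n$. For the base case $n = 0$ we have $(\PP^1, \{pt\})$ which, up to the coordinate change sending the chosen point to $\infty$, is exactly $\bcube$, so the claim is the definition of $\CI$. For the inductive step the idea is to relate $(\PP^{n+1}, \PP^n)$ to $(\PP^n, \PP^{n-1})$ by exhibiting a suitable blowup: blow up $\PP^{n+1}$ at a point $x \in \PP^n$. The exceptional divisor $E \cong \PP^n$ and the strict transform $H \cong \PP^n$ of the original hyperplane meet in $\PP^{n-1}$; since $x \in \PP^n$, the total transform $\pi^{-1}\PP^n = H + E$ (with multiplicities from the blowup), and because the blowup is centered \emph{inside} the modulus $\pi^{-1}\PP^n$ one gets that $(P, \pi^{-1}\PP^n) \to (\PP^{n+1}, \PP^n)$ is an isomorphism in $\uMCor$ — this is the last assertion of the lemma, so I would prove it first and then feed it back. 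On the blown-up side, $P$ is a $\PP^1$-bundle (indeed a $\bcube$-bundle once the modulus is accounted for) over $\PP^n$: away from $H$ the pair $(P \setminus H, (\pi^{-1}\PP^n)|_{P \setminus H})$ fibers over $(\PP^n, \PP^{n-1})$ with fiber $\bcube$, and Lemma~\ref{lemm:MVcubeBundle} applies. Chaining these isomorphisms with the inductive hypothesis $M(\XX \otimes (\PP^n, \PP^{n-1})) \cong M(\XX)$ closes the induction. (Everything is tensored with a fixed $\XX$ throughout, which is harmless since $\otimes$ is additive and exact enough to pass through these identifications.)

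Next, the two assertions about the blowup $P \to \PP^{n+1}$ of a point $x$. For $x \notin \PP^n$: here $E \cong \PP^{n} \hookrightarrow P$ and I want $M(\XX \otimes (E, \varnothing)) \cong M(\XX \otimes (P, H))$. The point is that $(P, H) \to$ ``$(\PP^{n+1}, \PP^n)$ contracted to $E$'' — more precisely, $P$ retracts onto $E$ along the $\bcube$-bundle structure: $P$ is a $\PP^1$-bundle over $E \cong \PP^n$ (projection from $x$), the divisor $H$ is a section of this bundle (since $\PP^n$ maps isomorphically to $E$ under projection from a point not on it), and removing that section exhibits $(P \setminus H, \text{empty divisor})$ — wait, more carefully, one writes $(P, H)$ as a $\bcube$-bundle over $(E, \varnothing)$: Zariski-locally on $E$, $P \cong E \times \PP^1$ with $H$ the constant section $\{\infty\}$, so $(P, H) \cong (E, \varnothing) \otimes \bcube$ locally, and Lemma~\ref{lemm:MVcubeBundle} gives $M(\XX \otimes (P,H)) \cong M(\XX \otimes (E, \varnothing))$. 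For $x \in \PP^n$: this is the isomorphism-in-$\uMCor$ statement mentioned above, $(P, \pi^{-1}\PP^n) \to (\PP^{n+1}, \PP^n)$; combined with the first part of the lemma this yields $M(\XX \otimes (P, \pi^{-1}\PP^n)) \cong M(\XX \otimes (\PP^{n+1}, \PP^n)) \cong M(\XX)$.

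The main obstacle I anticipate is the careful bookkeeping of Cartier divisors under the blowup: one must check that the \emph{total} transform $\pi^{-1}\PP^n$ (as a Cartier divisor, with the exceptional multiplicity) is precisely what makes $(P, \pi^{-1}\PP^n) \to (\PP^{n+1}, \PP^n)$ a blowup-inside-the-modulus and hence an isomorphism in $\uMCor$ — this uses that the center $x$ lies in $|\PP^n|$, so the blowup is dominated by, and dominates, a blowup of $\PP^{n+1}$ along a subscheme supported in $|\PP^n|$. One also has to verify the local triviality hypotheses of Lemma~\ref{lemm:MVcubeBundle} honestly: that the relevant open pieces of $P$, with the \emph{induced} (minimal) modulus, are genuinely isomorphic as modulus pairs to a product with $\bcube$, not merely birational to one. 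Once these two divisor-theoretic checks are in place, the rest is a formal chaining of Lemma~\ref{lemm:MVcubeBundle}, the definition of $\CI$, and Remark~\ref{rema:leftProperRemark}.
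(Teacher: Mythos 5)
Your plan is correct and follows essentially the same route as the paper: base case $n=0$ from the class $\CI$, the blowup-inside-the-modulus isomorphism $(P,\pi^{-1}\PP^n)\cong(\PP^{n+1},\PP^n)$ in $\uMCor$, and then Lemma~\ref{lemm:MVcubeBundle} applied to the $\PP^1$-bundle $P=\PP(\OO_{\PP^n}(-1)\oplus\OO_{\PP^n})\to E\cong\PP^n$, with induction on $n$. The one phrase to tighten is ``away from $H$ the pair $(P\setminus H,\dots)$ fibers over $(\PP^n,\PP^{n-1})$'': restricting to $P\setminus H$ only gives a bundle over $\AA^n=\PP^n\setminus\PP^{n-1}$, whereas what is needed (and what the paper uses) is that the \emph{whole} pair $(P,\,H+E)$ is a $\bcube$-bundle over $(E,\,E\cap H)\cong(\PP^n,\PP^{n-1})$, with $E$ the section at infinity and $H=\pi_P^{-1}(E\cap H)$ the vertical component of the modulus.
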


\begin{proof}
The proof is by induction on $n$. It is true for $n = 0$ by definition of $\uMDM^{\eff}$. 
In the notation of the statement, it follows from the definition of $\uMCor$ that if $x \in \PP^{n}$, then $(\PP^{n+1}, \PP^{n}) \cong (P, \pi^{-1}\PP^{n})$ in $\uMCor$, so it suffices to show the second two claimed isomorphisms.

Recall that $P$ is isomorphic to $\PP(\OO_{\PP^{n}}(-1) \oplus \OO_{\PP^{n}})$ and in particular, there is a canonical projection $P \to \PP^{n}$ making $P$ a $\PP^1$-bundle over $\PP^{n}$, which maps $E$ isomorphically to $\PP^{n}$. 

Suppose first that $x \in \PP^{n}$. Then under the identification $E \cong \PP^{n}$, the divisor $H$ is $\pi^{-1}(\pi(H \cap E))$. More importantly, the induced map $(P, \pi^{-1}\PP^n) \to (E, \pi(E \cap H)) \cong (\PP^{n}, \PP^{n-1})$ is a $\oc$-bundle, cf. Lemma~\ref{lemm:MVcubeBundle}. Consequently, Lemma~\ref{lemm:MVcubeBundle} implies $M(\XX \otimes (P, \pi^{-1}\PP^n)) \cong M(\XX \otimes (\PP^{n}, \PP^{n-1}))$, and so by the inductive hypothesis, we find that $M(\XX \otimes (P, \pi^{-1}\PP^n)) \cong M(\XX)$, as desired. On the other hand, if $x \not\in \PP^{n}$, then $(P, H) \to (E, \varnothing)$ is a $\oc$-bundle, and the same argument produces the other desired isomorphism.
\end{proof}

\begin{ques}
If $\TT \in \uMCor$ is such that $\ol{T}$ is a toric variety and $|T_\infty| \to \ol{T}$ is an inclusion of toric varieties. When do we have $M(\TT) \cong M(\Spec(k), \varnothing)$?

If $T_\infty = T_\infty + T'_\infty$, when do we have $M(T_\infty, \varnothing) \cong M(\ol{T}, T_\infty + T_\infty')$?
\end{ques}

\section{Smooth blowups}\label{smoothblowup}

Let $\ZZ \to \XX$ be a transversal morphism of log smooth modulus pairs, Def.~\ref{defi:lst}. %
Let $\pi: \ol{X'} \to \ol{X}$ be the blowup of $\ol{X}$ in $\ol{Z}$ with exceptional divisor $j:\ol{E} \to \ol{X'}$. %
Put $\XX'=(\ol{X}',\pi^{-1}(\Xinf))$ and
$\ZZ=(\ol{Z},\Zinf)$ with $\Zinf=\ol{Z}\cap \Xinf$ and $\EE=(\ol{E},\pi^{-1}(\Zinf))$.

We are interested in the square
\begin{equation}\label{blowupsquare}
\xymatrix{
\EE  \ar[r] \ar[d] & \XX' \ar[d] \\
\ZZ \ar[r] & \XX.
} 
\end{equation}

Consider the following statement.

\begin{enumerate}
 \item[{$(SBU)_{\ZZ \stackrel{i}{\to}\XX}$}] The complex $\EE \to \ZZ \oplus \XX' \to \XX$ is isomorphic to zero%
 \footnote{This implies that the square \eqref{blowupsquare} becomes homotopy cartesian in $\uMDM^{\eff}$ in the sense of \cite[Def.1.4.1]{nee} but a priori, is stronger. We work with the stronger statement because the nine lemma (cf. proof of Lemma~\ref{lemm:2}) and 2-out-of-3 property (cf. proof of Lemma~\ref{lemm:3}) are much easier in this setting. Indeed, we don't even know if these two facts, as we want them stated, are true in an abstract triangulated category.
} %
  in $\uMDM^{\eff}$.
 \end{enumerate}

\begin{lemm} \label{lemm:2}
If there exists an open covering $\{U_i \to \ol{X}\}_{i \in I}$ such that 
$(SBU)_{\ZZ_J \stackrel{}{\to} \XX_J}$ is true for every $J \subsetneq I$ where $U_J = \cap_{i \in J} U_i$ and $\XX_J=(U_J, X_\infty \cap U_J)$ and
$\ZZ_J=({Z}\cap U_J,\Zinf\cap U_J)$. Then $(SBU)_{\ZZ \stackrel{i}{\to}\XX}$ is true.
\end{lemm}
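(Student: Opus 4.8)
The plan is to run a Mayer--Vietoris induction on $|I|$, entirely parallel to the proof of Lemma~\ref{lemm:MVcubeBundle}. First I would reduce to $I$ finite, using that $\ol{X}$ is quasicompact and that $(SBU)$ only involves finitely many of the $U_i$ at a time (and that the blowup and exceptional divisor restrict compatibly to opens). Then, arguing by induction on $|I|$, I may assume $(SBU)_{\ZZ_J \to \XX_J}$ holds for all $J \subseteq I$ with $J \neq I$ — the base case being a single open, which is literally the hypothesis for the empty intersection together with... actually the cleanest bookkeeping is: the inductive hypothesis gives $(SBU)$ for all proper subsets $J \subsetneq I$, which is exactly what is assumed, so there is nothing to induct on in the naive sense; instead one inducts by peeling off one open at a time. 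Concretely, pick $i_0 \in I$, set $J_0 = I \setminus \{i_0\}$, $V = \bigcup_{i \in J_0} U_i$, $V' = V \cap U_{i_0}$. By the inductive hypothesis applied to the cover $\{U_i\}_{i \in J_0}$ of $V$, $(SBU)$ holds for $(\ol{Z}\cap V, \ldots) \to (V, \ldots)$; by hypothesis it holds for $U_{i_0}$ and for $V' = V \cap U_{i_0}$ (covered by the $U_i \cap U_{i_0}$, $i \in J_0$, again by the inductive hypothesis); and the goal is $(SBU)$ for $\ol{X} = V \cup U_{i_0}$.

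The key step is then to assemble these into a single diagram. For each of the four opens $W \in \{V', V, U_{i_0}, \ol{X}\}$ one has the three-term complex $C(W) := [\EE_W \to \ZZ_W \oplus \XX'_W \to \XX_W]$, where subscript $W$ denotes restriction of everything (the divisor, $\ol{Z}$, the blowup $\ol{X'} = Bl_{\ol Z}\ol X$, the exceptional divisor) to $W$; this is legitimate because blowups commute with the flat base change along an open immersion, and the modulus divisors are defined by pullback. These fit into a $4 \times 3$ diagram
\[ \xymatrix{
C(V') \ar[r] \ar[d] & C(V) \oplus C(U_{i_0}) \ar[r] \ar[d] & C(\ol{X}) \ar[d] \\
\ast & \ast & \ast
} \]
— better to say: arrange the four complexes $C(V') \to C(V) \oplus C(U_{i_0}) \to C(\ol{X})$ as the rows, where in each of the three ``columns'' (exceptional divisors, $\ZZ\oplus\XX'$, and $\XX$) the relevant row is a Mayer--Vietoris complex, hence lies in $\langle \MV \rangle$ and so is zero in $\uMDM^{\eff}$. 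Thus the total complex of the $3\times 3$-ish array is zero in $\uMDM^{\eff}$ computed one way; computed the other way (along the $C(W)$ direction), three of the four columns $C(V'), C(V), C(U_{i_0})$ are zero in $\uMDM^{\eff}$ by the inductive/hypothesised $(SBU)$, so the total complex is isomorphic in $\uMDM^{\eff}$ to $C(\ol{X})$ (up to shift). Comparing, $C(\ol{X}) \cong 0$ in $\uMDM^{\eff}$, which is exactly $(SBU)_{\ZZ \to \XX}$. This is the same ``total complex of a nine-term array'' manoeuvre used in Lemma~\ref{lemm:MVcubeBundle}, and the footnote to $(SBU)$ flags precisely that working with ``isomorphic to zero'' rather than merely ``homotopy cartesian'' is what makes this bookkeeping clean.

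The main obstacle — really the only place one must be careful — is verifying that each row in the three ``columns'' genuinely lies in the class $\MV$ as defined in Definition~\ref{defi:MVCI}, i.e.\ that the open-immersion/étale square is of the required minimal form and, crucially, that the modulus divisors on the restricted pieces are the minimal ones ($W_\infty = $ the appropriate pullback). For the $\XX$-column this is immediate. For the $\XX'$-column one needs that $Bl_{\ol Z}\ol X$ restricted to $V$ equals $Bl_{\ol Z \cap V} V$ with the pulled-back divisor, and similarly for $\EE$; these are standard properties of blowups under open immersions, but they have to be invoked. One also needs the elementary fact that if $\{U_i\}_{i\in J_0}$ covers $V$ then $\{U_i \cap U_{i_0}\}_{i \in J_0}$ covers $V' = V \cap U_{i_0}$, so the inductive hypothesis does apply to $V'$. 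None of this is deep; the proof is essentially formal once the compatibility of all data with open restriction is noted.
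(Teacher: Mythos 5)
Your proposal is correct and follows essentially the same route as the paper: reduce to a finite cover, induct by reducing to a two-element cover (the paper phrases this as ``it suffices to consider the case $n=2$'', you phrase it as peeling off one open $U_{i_0}$ against $V=\bigcup_{i\neq i_0}U_i$), and then run the total-complex argument on the $3\times 3$ array whose columns are $\MV$ complexes and whose rows are the $(SBU)$ complexes, three of which vanish by hypothesis. The compatibility of blowups and exceptional divisors with restriction to opens, which you rightly flag, is used implicitly in the paper as well.
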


\begin{proof}
As $\ol{X}$ is quasicompact, we can assume that $I$ is finite, say of size $n$. By induction on $n$, it suffices to consider the case $n = 2$. Consider the diagram
\[ \xymatrix{
\EE_{12} \ar[r] \ar[d] & \ZZ_{12} \oplus \XX'_{12} \ar[r] \ar[d] & 
\XX_{12} \ar[d] \\
\EE_1 \oplus \EE_2 \ar[r] \ar[d] & 
\biggl (\ZZ_1 \oplus \XX'_1 \biggr ) \oplus 
\biggl (\ZZ_2 \oplus \XX'_2 \biggr ) \ar[r] \ar[d] & 
\XX_1 \oplus \XX_2  \ar[d] \\
\EE \ar[r] & \ZZ \oplus \XX' \ar[r] & \XX
} \]
where $\XX'_J, \EE_J$ are the obvious analogues of $\XX_J, \ZZ_J$. Since the columns belong to the class $\MV$, they are zero in $\uMDM^{\eff}$, so the total complex is zero as well. By hypothesis, the top two rows are zero in $\uMDM^{\eff}$, so the lower row is isomorphic to the total complex. But we have just seen that this latter is zero, and therefore so is the former.
\end{proof}

\begin{lemm} \label{lemm:3}
Let $q: V \to \ol{X}$ be an \'etale morphism such that $q^{-1}{\ol{Z}} = {\ol{Z}}$. Define $\VV = (V, q^{-1}X_\infty)$. Then $(SBU)_{\ZZ \to \XX}$ is true if and only if $(SBU)_{\ZZ{\to}\VV}$ is true.
\end{lemm}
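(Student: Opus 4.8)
The plan is to use the Mayer--Vietoris property (the class $\MV$) twice, once over $\ol X$ and once over its blowup $\ol X'$, in order to rewrite the complex $\EE\to\ZZ\oplus\XX'\to\XX$ of $(SBU)_{\ZZ\to\XX}$ in terms of $\VV$ and a blowup of $V$, and then to collapse it onto the complex of $(SBU)_{\ZZ\to\VV}$. Since $(SBU)$ only asks that a complex become isomorphic to zero in $\uMDM^{\eff}$, the equivalence is then immediate.

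First the geometry. As $q$ is étale, hence flat, and $q^{-1}\ol Z=\ol Z$, blowup commutes with the base change $q$: the blowup $V'$ of $V$ in $\ol Z$, with structure map $\pi_V\colon V'\to V$, equals $V\times_{\ol X}\ol X'$, the induced $q'\colon V'\to\ol X'$ is étale, the exceptional divisor of $V'$ is $(q')^{-1}\ol E$, and $(q')^{-1}\ol E\to\ol E$ is an isomorphism. In particular $V'\setminus(q')^{-1}\ol E=V\setminus\ol Z$ and $\ol X'\setminus\ol E=\ol X\setminus\ol Z$; and since $q^{-1}\ol Z=\ol Z$ and every modulus in play is pulled back from $X_\infty$, the modulus pairs $\EE$ and $\ZZ$ attached to $\ZZ\to\VV$ coincide with those attached to $\ZZ\to\XX$, so $(SBU)_{\ZZ\to\VV}$ concerns $\EE\to\ZZ\oplus\VV'\to\VV$ with $\VV=(V,q^{-1}X_\infty)$ and $\VV'=(V',\pi_V^{-1}q^{-1}X_\infty)$. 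Write $\UU=(\ol X\setminus\ol Z,X_\infty|)$ and $\WW=(V\setminus\ol Z,q^{-1}X_\infty|)$ with their minimal (restricted) moduli; under the identifications above, $\WW$ is the common corner of the two $\MV$-squares below and $\UU$ is, up to canonical isomorphism, their common bottom-left term.

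The distinguished squares presenting $\ol X$ as $V$ glued to $\ol X\setminus\ol Z$ along $V\setminus\ol Z$, and $\ol X'$ as $V'$ glued to $\ol X'\setminus\ol E$ along $V'\setminus\ol E$, both lie in $\MV$ (open immersion along the bottom, étale along the right, an isomorphism over the closed complement by $q^{-1}\ol Z=\ol Z$, resp. $(q')^{-1}\ol E=\ol E$). Hence the two-term complexes $[\WW\to\VV\oplus\UU]$ and $[\WW\to\VV'\oplus\UU]$ are isomorphic in $\uMDM^{\eff}$ to $\XX$ and $\XX'$. The blowup maps $\pi\colon\ol X'\to\ol X$ and $\pi_V\colon V'\to V$, together with the identities on $\UU$ and $\WW$, form a morphism of $\MV$-squares from the second to the first, giving a chain map $[\WW\to\VV'\oplus\UU]\to[\WW\to\VV\oplus\UU]$ over $\pi\colon\XX'\to\XX$ that is the identity on the $\WW$- and $\UU$-summands. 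Substituting these resolutions for $\XX$ and $\XX'$ into $\EE\to\ZZ\oplus\XX'\to\XX$ --- using that the closed immersions $\EE\to\XX'$ and $\ZZ\to\XX$ factor through $\VV'$ and $\VV$ (since $\ol E$ and $\ol Z$ sit inside $V'$ and $V$, not inside $\ol X\setminus\ol Z$) --- yields a bounded double complex of modulus pairs, with signs arranged as in the proof of Lemma~\ref{lemm:2}, whose total complex is isomorphic in $\uMDM^{\eff}$ to the complex of $(SBU)_{\ZZ\to\XX}$, its columns being $\MV$-resolutions of the terms of that complex. One row of this double complex is the contractible complex $[\WW\xrightarrow{\mathrm{id}}\WW]$; deleting it, and then cancelling the remaining identity $\UU\to\UU$ appearing in the differential (an elementary homotopy reduction of complexes in $\uMCor$), collapses the total complex onto $\EE\to\ZZ\oplus\VV'\to\VV$, the complex of $(SBU)_{\ZZ\to\VV}$. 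So the two complexes are isomorphic in $\uMDM^{\eff}$, whence one is isomorphic to zero iff the other is.

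The only genuine work is the sign bookkeeping needed to assemble the two $\MV$-resolutions and the blowup square into a single double complex. The geometric point that makes this possible --- and the sole essential use of the hypothesis $q^{-1}\ol Z=\ol Z$ --- is that $\ol E$ is the exceptional divisor of $V'$ exactly as it is of $\ol X'$; this is what keeps the maps out of $\EE$ and $\ZZ$ away from the auxiliary term $\UU$, and thereby forces the clean collapse onto the complex of $(SBU)_{\ZZ\to\VV}$.
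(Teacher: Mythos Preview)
Your argument is correct and rests on exactly the same geometric input as the paper's: the two $\MV$-squares presenting $\XX$ (resp.\ $\XX'$) via the Nisnevich cover $\{V,\ol X\setminus\ol Z\}$ (resp.\ $\{V',\ol X'\setminus\ol E\}$), together with the observation that blowup commutes with the flat base change $q$. The only difference is in packaging: the paper invokes its $2$-out-of-$3$ lemma (Lemma~\ref{lemm:2o3}) twice on stacked squares---first to deduce that the square $(V'\to\ol X',\,V\to\ol X)$ is zero, then to conclude that the two $(SBU)$-complexes vanish simultaneously---whereas you build a single double complex and cancel the contractible $\WW\xrightarrow{\id}\WW$ and $\UU\xrightarrow{\id}\UU$ pieces directly to exhibit an explicit isomorphism between the two $(SBU)$-complexes in $\uMDM^{\eff}$. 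Both routes are short; the paper's is slightly more modular, yours slightly more explicit.
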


\begin{proof}
This follows from the 2-out-of-3 property of homotopy cartesian squares of chain complexes, cf. Lemma~\ref{lemm:2o3}. 
Let $U = \ol{X} - Z$, and consider the following squares
\[ \xymatrix{
V {\times_{\ol{X}}} U \ar[r] \ar[d] \ar@{}[dr]|{(A)} & U \ar[d] && %
\ol{E} \ar[r] \ar[d] \ar@{}[dr]|{(C)} & \ol{Z} \ar[d] %
\\
V {\times_{\ol{X}}} {\ol{X}}' \ar[r] \ar[d] \ar@{}[dr]|{(B)} & {\ol{X}}' \ar[d] && %
{\ol{X}}' {\times_{\ol{X}}} V \ar[r] \ar[d] \ar@{}[dr]|{(D)} & V \ar[d] 
\\
V \ar[r] & {\ol{X}} && %
{\ol{X}}' \ar[r] & {\ol{X}}
} \]
The square (A), and the outer square of (A) and (B) are distinguished Nisnevich squares, so their associated objects in $\uMDM^{\eff}$ are zero. Hence, the same is true of (B), by Lemma~\ref{lemm:2o3}. 
But (B) and (D) are isomorphic, so (D) also gives rise to a zero object of $\uMDM^{\eff}$. Therefore, again by the 2-out-of-3 property, (C) gives rise to a zero object if and only if the outer square of (C) and (D) does. Since flatness of $V \to {\ol{X}}$ implies that the pullback square (D) is also a strict transform square, (C) giving rise to a zero object is precisely $(SBU)_{\ZZ{\to}\VV}$.
\end{proof}

\begin{theo}\label{theo;smoothblowup}
The statement $(SBU)_{\ZZ \to \XX}$ is true for any 
transversal morphism of log smooth modulus pairs.
\end{theo}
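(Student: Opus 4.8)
The plan is to reduce the general case to the explicit ``toric'' computation of Lemma~\ref{lemm:1} by a sequence of localisation steps. First I would apply Lemma~\ref{lemm:2} to reduce to a suitable Zariski-local situation: it suffices to check $(SBU)_{\ZZ \to \XX}$ after passing to the members of a finite open cover of $\ol{X}$ (and all their finite intersections). We already know $(SBU)$ trivially on the open set $U_0 = \ol{X} \setminus \ol{Z}$, since there the blowup is an isomorphism and $\ZZ$, $\EE$ are empty, so the complex $\EE \to \ZZ \oplus \XX' \to \XX$ is literally $[0 \to 0 \oplus \XX \xrightarrow{\id} \XX]$, which is zero. Thus the content is concentrated near $\ol{Z}$, and by Corollary~\ref{coro:etNisLoc} we may choose the remaining members $U_i$ of the cover together with \'etale maps $U_i \xleftarrow{pr_1} U'_i \xrightarrow{pr_2} \ol{Z} \times \PP^c$ which, crucially, restrict to isomorphisms on $\ol{Z}$ and are compatible with the divisors $X_\infty$ in the minimal sense required by the strict-transform/Nisnevich squares.

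Next, I would use Lemma~\ref{lemm:3} twice to descend along these \'etale correspondences. Applied to $pr_1: U'_i \to U_i$ (which satisfies $pr_1^{-1}\ol{Z} = \ol{Z}$ by the corollary), it says $(SBU)_{\ZZ \to \XX_{U_i}}$ holds iff $(SBU)_{\ZZ \to (U'_i, pr_1^* X_\infty)}$ holds; applied to $pr_2: U'_i \to \ol{Z} \times \PP^c$, together with the identification $pr_1^{-1}(U_{i\infty}) = pr_2^{-1}(Z_{i\infty} \times \PP^c)$, it says this in turn is equivalent to $(SBU)$ for the model closed immersion $(\ol{Z}, Z_\infty) \to (\ol{Z}, Z_\infty) \otimes (\PP^c, \PP^{c-1})$ — i.e. the zero section of the trivial $(\PP^c,\PP^{c-1})$-bundle over $\ZZ$. (One should check that blowing up $\ol{Z}\times\PP^c$ along $\ol{Z}\times\{0\}$ commutes with the \'etale base change $pr_2$, and that the divisor pullbacks match up; this is where the transversality hypothesis $s \le d-c$ from Def.~\ref{defi:lst} is used, guaranteeing the exceptional divisor meets the modulus in the expected way.)

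Finally, for the model case I would compute directly. The blowup of $\ol{Z} \times \PP^c$ along $\ol{Z} \times \{0\}$ is $\ol{Z} \times P$ where $P = Bl_{\PP^c}\{0\}$, a $\PP^{c-1}$-bundle over $\PP^{c-1}$, with exceptional divisor $\ol{Z} \times E$; here $\{0\}\notin\PP^{c-1}$, so Lemma~\ref{lemm:1} (the case $x\notin\PP^n$, with $n+1 = c$) applies after tensoring with $\ZZ$, and gives $M(\ZZ\otimes(E,\varnothing)) \cong M(\ZZ\otimes(P,H))$ where $H$ is the strict transform of $\PP^{c-1}$; combined with $M(\ZZ\otimes(\PP^c,\PP^{c-1})) \cong M(\ZZ)$ and $M(\ZZ\otimes(\PP^{c-1},\PP^{c-2}))\cong M(\ZZ)$, this shows the four-term complex $\EE \to \ZZ\oplus\XX' \to \XX$ for the model is, termwise in $\uMDM^\eff$, isomorphic to $M(\ZZ) \to M(\ZZ)\oplus M(\ZZ) \to M(\ZZ)$ with the evident maps, whose total complex is zero. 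I expect the main obstacle to be precisely the bookkeeping in the second step: verifying that the \'etale correspondences of Corollary~\ref{coro:etNisLoc} pull the blowup square \eqref{blowupsquare} back to the model square on the nose (blowup commuting with \'etale base change is standard, but one must also match exceptional divisors, strict transforms of $X_\infty$, and the ``minimality'' of all the modulus structures so that Lemma~\ref{lemm:3} genuinely applies), together with the earlier reduction that the squares $(D)$ appearing there really are strict-transform squares.
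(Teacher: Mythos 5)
Your proposal follows essentially the same route as the paper: reduce to a Zariski cover via Lemma~\ref{lemm:2}, pass along the \'etale correspondences of Corollary~\ref{coro:etNisLoc} using Lemma~\ref{lemm:3} to reach the model case $\ZZ \otimes \{0,\dots,0\} \to \ZZ \otimes (\PP^c,\PP^{c-1})$, and conclude from Lemma~\ref{lemm:1} that both horizontal maps of the blowup square become isomorphisms, so the associated total complex vanishes. The argument is correct (modulo the slip that $Bl_{\PP^c}\{0\}$ is a $\PP^1$-bundle, not a $\PP^{c-1}$-bundle, over $\PP^{c-1}$), and your extra attention to the compatibility of exceptional divisors and modulus structures under \'etale base change is a point the paper's proof passes over silently.
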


\begin{proof}
By Lemma~\ref{lemm:2} it suffices to find an open Zariski cover $\{U_i \to \ol{X}\}_{i \in I}$ and show $(SBU)_{\ZZ_J \stackrel{}{\to} \XX_J}$ is true for each $J \subsetneq I$ where $U_J = \cap_{i \in J} U_i$. Choose a cover as in Corollary~\ref{coro:etNisLoc}. By Lemma~\ref{lemm:3}, it suffices to show that $(SBU)_{\ZZ \otimes \{0, \dots, 0\} \to \ZZ\otimes(\PP^c, \PP^{c-1})}$ is true, where $\ZZ=(\ol{Z},\Zinf)$ and $\{0, \dots, 0\} \in \PP^c$ is the point orthogonal to $\PP^{c-1} \subseteq \PP^c$. But by Lemma~\ref{lemm:1}, 
\[ M(\ZZ) \to M((\PP^c, \PP^{c-1})\otimes\ZZ) \]
is an isomorphism, and so is 
\[ M((E, \varnothing) \otimes\ZZ) \to M((P, H) \otimes \ZZ), \] 
where $P \to \PP^c$ is the blowup with centre $\{0, \dots, 0\}$, exceptional divisor $E$, and strict transform $H$ of $\PP^{c-1}$. Hence, in this case, the two horizontal morphisms in the square \eqref{blowupsquare} become isomorphisms in $\uMDM^{\eff}$, or in other words, the rows become zero objects. Hence, their cone, the object associated to the square, is also the zero object.
\end{proof}

\section{Some homological algebra} \label{sec:homAlg}

\begin{lemm} \label{lemm:2o3}
Consider a commutative diagram
\[ \xymatrix{
A \ar[r] \ar[d]_a & B \ar[r] \ar[d]_b & C \ar[d]_c \\
A' \ar[r]  & B' \ar[r]  & C'  \\
} \]
in an additive category $\cA$, the associated complexes
\[ A \to A' \oplus B \to B', \]
\[ A \to A' \oplus C \to C', \]
\[ B \to B' \oplus C \to C', \]
and suppose we have a triangulated functor $\Phi: K^b(\cA) \to T$ to some triangulated category $T$. Then two of the above complexes are zero in $T$ if and only if the third one is also zero.
\end{lemm}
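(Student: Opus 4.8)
The plan is to realize the three complexes as, up to sign and rotation, the cones of the vertical maps of certain $2$-term complexes, so that the statement reduces to the octahedral axiom in $T$. Concretely, write $\alpha = [A\to A']$, $\beta = [B\to B']$, $\gamma = [C\to C']$ for the three vertical arrows viewed as objects of $K^b(\cA)$ concentrated in degrees $-1$ and $0$; the commuting squares give morphisms $\alpha \to \beta \to \gamma$ in $K^b(\cA)$. A direct computation with the totalization of a square shows that the complex $A \to A'\oplus B \to B'$ is (a shift of) the cone of $\alpha \to \beta$ in $K^b(\cA)$, and similarly for the other two complexes. Thus, after applying the triangulated functor $\Phi$, we obtain a diagram $\Phi\alpha \to \Phi\beta \to \Phi\gamma$ in $T$ together with distinguished triangles expressing $\Cone(\Phi\alpha\to\Phi\beta)$, $\Cone(\Phi\beta\to\Phi\gamma)$ and $\Cone(\Phi\alpha\to\Phi\gamma)$.

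The key input is then the octahedral axiom applied to the composable pair $\Phi\alpha \to \Phi\beta \to \Phi\gamma$: it furnishes a distinguished triangle
\[ \Cone(\Phi\alpha\to\Phi\beta) \to \Cone(\Phi\alpha\to\Phi\gamma) \to \Cone(\Phi\beta\to\Phi\gamma) \to \Cone(\Phi\alpha\to\Phi\beta)[1]. \]
Once this triangle is in hand, the ``two out of three'' conclusion is immediate: in any triangulated category, if two of the three terms of a distinguished triangle are zero then so is the third (the map between the two nonzero-a-priori terms is then an isomorphism with zero source or target, or one invokes the long exact sequence after applying $\Hom(W,-)$). So I would (i) set up the notation for $\alpha,\beta,\gamma$ and the maps between them; (ii) verify the identification of each of the three given complexes with the appropriate cone in $K^b(\cA)$, being careful about the degree conventions and signs in the totalization so that the arrows in the octahedron match the arrows among the complexes; (iii) apply $\Phi$ and the octahedral axiom; (iv) conclude by the elementary two-out-of-three fact for distinguished triangles.

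The main obstacle is purely bookkeeping: matching signs and shifts so that the three triangles one writes down are genuinely the three edges of a single octahedron built on $\Phi\alpha\to\Phi\beta\to\Phi\gamma$, rather than on some other composite. In particular one must check that the connecting map $\Cone(\Phi\alpha\to\Phi\gamma)\to\Cone(\Phi\beta\to\Phi\gamma)$ produced by the octahedral axiom is (up to the identifications in step (ii)) the map of complexes induced by $B'\to C'$ and $A'\oplus C\to B'\oplus C$, and not its negative or some rotation. This is the kind of verification that is routine but genuinely needs to be done once; I would do it by choosing the standard mapping-cone model $\Cone(f)=\operatorname{Tot}(f)$ in $K^b(\cA)$, where all three cones and all the structure maps are given by explicit matrices, and observing that the octahedral data (which exists for the homotopy category $K^b(\cA)$ by the usual explicit construction) is transported to $T$ by the triangulated functor $\Phi$. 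No $\uMDM$-specific input is needed; the lemma is a statement about an arbitrary triangulated functor out of $K^b(\cA)$.
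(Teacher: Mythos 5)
Your proposal is correct, and it shares with the paper's proof the essential first step: identifying each of the three total complexes with the cone, in $K^b(\cA)$, of the corresponding map between the two-term complexes $\alpha=[A\to A']$, $\beta=[B\to B']$, $\gamma=[C\to C']$. Where you diverge is in how you conclude. The paper converts each vanishing statement into the statement that one of the maps $\Phi\alpha\to\Phi\beta$, $\Phi\beta\to\Phi\gamma$, or the composite $\Phi\alpha\to\Phi\gamma$ is an isomorphism in $T$ --- using that a morphism in a triangulated category is an isomorphism precisely when its cone vanishes --- and then finishes with the completely elementary two-out-of-three property for isomorphisms under composition. You instead keep the three cones and invoke the octahedral axiom to place them in a single distinguished triangle, concluding with the fact that a distinguished triangle with two zero vertices has its third vertex zero as well. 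Both routes work; yours buys a slightly stronger-looking intermediate statement (an actual triangle in $T$ relating the three total complexes) at the cost of the sign and rotation bookkeeping you rightly flag as the main obstacle, while the paper's route avoids the octahedron entirely and requires no compatibility check beyond the initial identification of each complex as a cone.
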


\begin{proof}
First note that the complex $A \to A' \oplus B \to B'$ is zero if and only if $Cone(a) \to Cone(b)$ is an isomorphism, since the former is the cone of the latter. The analogous statement is true for the other two complexes. 
Then notice that two of the three morphisms $\xymatrix{ Cone(a) \ar@/^9pt/[rr] \ar[r] & Cone(b) \ar[r] & Cone(c)}$ are quasi-isomorphisms of and only if the third is.
\end{proof}

%
%


\end{document}